\newcommand{\R}{\mathbb{R}}
\newcommand{\C}{\mathbb{C}}
\newcommand{\N}{\mathbb{N}}
\newtheorem{thm}{Theorem}[section] 
\theoremstyle{definition} 
\theoremstyle{remark} 
\newtheorem{prop}{Proposition}
\newtheorem*{ackn}{Acknowledgements}
\newcommand{\factor}[2]{{\raisebox{0.em}{$#1$}\left/\raisebox{-.1em}{$#2$}\right.}}
\begin{document}
 \title{Generalized derivations and generalized\\ exponential monomials on hypergroups}
 \author{{\.Z}ywilla Fechner, Eszter Gselmann and L\'{a}szl\'{o} Sz\'ekelyhidi}
%

\maketitle

\begin{abstract}
In one of our former papers {\it Endomorphisms of the measure algebra of commutative hypergroups (\href{https://arxiv.org/abs/2204.07499}{arXiv:2204.07499})} we considered exponential monomials on hypergroups and  higher order derivations of the corresponding measure algebra. 
Continuing with this, we are now looking for the connection between the generalized exponential polynomials of a commutative hypergroup and the higher order derivations of the corresponding measure algebra. 
\end{abstract}


\section{Introduction}
\label{sec:introduction}
In this paragraph we are going to discuss moment functions and exponential monomials in the hypergroup settings. Concerning hypergroups, we will follow the notation and  terminology of the monograph \cite{BloHey95}. More about moment functions and exponentials on hypergroups one can find in \cite{Sze13} and references therein. 

Given a commutative hypergroup $X$ we denote by $\mathscr C(X)$ the space of all continuous complex valued functions on $X$. Equipped with the pointwise linear operations (addition of functions and multiplication by scalars) and with the topology of uniform convergence on compact sets $\mathscr C(X)$ is a locally convex topological vector space. The space $\mathscr C(X)$ equipped with pointwise multiplication is also a topological ring which will be utilized in the sequel. The topological dual of $\mathscr C(X)$ as a topological vector space can be identified with the space $\mathscr M_c(X)$ of all compactly supported complex Borel measures with the pairing
$$
\langle \mu,f\rangle=\int_X f\,d\mu.
$$
In fact, $\mathscr M_c(X)$ itself is a commutative algebra equipped with the pointwise linear operations and with the convolution of measures defined by
$$
\langle \mu*\nu,f\rangle=\int_X \int_X f(x*y)\,d\mu(x)\,d\nu(y)
$$
for each $\mu,\nu$ in $\mathscr M_c(X)$ and $f$ in $\mathscr C(X)$. We shall call $\mathscr M_c(X)$ the {\it measure algebra} of $X$. The measure algebra bears its natural weak*-topology, and its topological dual can be identified with $\mathscr C(X)$: every weak*-continuous linear functional $\Lambda$ on $\mathscr M_c(X)$ arises from a continuous function $f$ in $\mathscr C(X)$ in the natural way
$$
\Lambda(\mu)=\int_X f\,d\mu
$$
whenever $\mu$ is in $\mathscr M_c(X)$ (see e.g. \cite[Theorem 3.43]{MR3185617}).

Besides the algebra structure $\mathscr M_c(X)$ can be considered as a module over the ring $\mathscr C(X)$ via the following action: for each $\mu$ in $\mathscr M_c(X)$ and $\varphi, f$ in $\mathscr C(X)$ we define
$$
\langle \varphi \cdot \mu,f\rangle=\int_X f\cdot \varphi\,d\mu.
$$
It is easy to check that $\mathscr M_c(X)$ is a module over the ring $\mathscr C(X)$. We simple write $\varphi \mu$ for $\varphi\cdot \mu$. We shall call {\it module homomorphism} a mapping $F:\mathscr M_c(X)\to\mathscr M_c(X)$ if it satisfies the equations
$$
F(\mu+\nu)=F(\mu)+F(\nu),\qquad \text{and} \qquad F(\varphi \mu)=\varphi F(\mu)
$$
for each $\mu,\nu$ in $\mathscr M_c(X)$ and $f$ in $\mathscr C(X)$. We note that every module homomorphism of $\mathscr M_c(X)$ is also a linear homomorphism, as multiplication by scalars is the same as multiplication by constant functions. Module homomorphisms of the measure algebra of a commutative hypergroup are described in  the recent paper of the present authors \cite{FecGseSze22b}.

In this paper we use  multi-index notation. We recall that, besides the usual vector-notation for the basic operations we use the following notation: for every  multi-indices $\alpha=(\alpha_1,\alpha_2,\dots,\alpha_r)$ and $\beta=(\beta_1,\beta_2,\dots,\beta_r)$ in $\N^r$, we shall write
$\alpha\leq \beta\enskip\text{whenever}\enskip \alpha_i\leq \beta_i\enskip\text{for} \enskip i=1,2,\dots r$, and the symbol  $\alpha<\beta$ means that  $\alpha\leq\beta$ and $\alpha\ne \beta$. Further, we use the notations
$$
|\alpha|=\alpha_1+\alpha_2+\cdots+\alpha_r,\hskip1cm \alpha!=\alpha_1!\cdot \alpha_2!\cdots\alpha_r!,
$$
$$
\binom{\alpha}{\beta}=\frac{\alpha!}{\beta! \cdot (\alpha-\beta)!},\hskip1cm x^{\alpha}=x_1^{\alpha_1}\cdot x_2^{\alpha_2}\cdot\dots\cdot x_r^{\alpha_r}.
$$
If there is no misunderstanding, the zero of $\N^r$ will be denoted by $0$ instead of $(0,0,\dots,0)$.

Let $X$ be a commutative hypergroup and $r$ a positive integer. The family of continuous functions $\varphi_{\alpha}:X\to \C$  is called a \emph{moment function sequence of rank $r$}, if 
	\begin{equation}\label{Eq3}
		\varphi_{\alpha}(x*y)=\sum_{\beta\leq \alpha} \binom{\alpha}{\beta} \varphi_{\beta}(x)\varphi_{\alpha-\beta}(y)
	\end{equation}
holds whenever $x,y$ are in $X$ and $\alpha$ is in $\N^r$. We may consider finite sequences as well, if we restrict $|\alpha|\leq N$ with some nonnegative integer $N$. Clearly, the function $\varphi_0$ is an exponential, and we shall say that the moment function sequence {\it corresponds to the} {\hbox{exponential $\varphi_0$}.

We call a continuous function $\varphi:X\to \C$ a {\it moment function}, if there is a positive integer $r$, a moment sequence $(\varphi_{\alpha})_{\alpha \in \mathbb{N}^{r}}$  of rank $r$, and a multi-index $\alpha$ in $\N^r$ such that $\varphi=\varphi_{\alpha}$. In this case $\alpha$ is called the {\it multi-degree} of $\varphi$. 

Given the commutative hypergroup $X$  and an element $y$ in $X$, the operator $\tau_y$ defined by 
$$
\tau_yf=\delta_{\check{y}}*f
$$
for each $f$ in $\mathscr C(X)$ is called the {\it translation by $y$}. Given an exponential $m$ the 
{\it modified difference corresponding to $m$ with increment $y$} is the measure 
$$
\Delta_{m;y}=\delta_{\check{y}}-m(y)\delta_o.
$$
The corresponding convolution operator can be written as 
$$
\Delta_{m;y}*f=(\tau_y-m(y) \operatorname{id})f.
$$
A closed linear subspace in $\mathscr C(X)$ is called a {\it variety}, if it is invariant with respect all translation operators.

Products of modified differences corresponding to $f$ will be written in the form
$$
\Delta_{f;y_1,y_2,\dots,y_{n+1}}=\Delta_{f;y_1}*\Delta_{f;y_2}*\cdots*\Delta_{f;y_{n+1}}
$$
whenever $y_1,y_2,\dots,y_{n+1}$ are in $X$. It is clear that the continuous function \hbox{$m:X\to\C$} is an exponential if and only if $m(o)=1$, and 
$$
\Delta_{m;y}*m=0
$$
for each $y$ in $X$. 

Given an exponential $m$ on $X$ the continuous function $\varphi:X\to\C$ is called a {\it generalized $m$-exponential monomial} if there exists a natural number $n$ such that
$$
\Delta_{m;y_1,y_2,\dots,y_{n+1}}*\varphi=0
$$
whenever $y_1,y_2,\dots,y_{n+1}$ are in $X$. If $\varphi$ is nonzero, then the the exponential $m$ and the smallest $n$ with this property are uniquely determined, and $n$ is called the {\it degree} of $\varphi$. Clearly, $\varphi$ is of degree $n\geq 1$ if and only if $\Delta_{m;y}*\varphi$ is of degree $n-1$ for some $y$.  For instance, the $m$-exponential monomials of degree $0$ are exactly the constant multiples of $m$. The $m$-exponential monomials of degree $1$, which vanish at $o$ are called {\it $m$-sine functions}. A generalized $m$-exponential monomial is called an \emph{$m$-exponential monomial} if its variety is finite dimensional. 

An important property of moment functions is expressed in the following theorem.

\begin{thm}\label{momexp}
Let $X$ be a commutative hypergroup and $m$ an exponential on $X$. Every moment function corresponding to $m$ is an $m$-exponential monomial. 
\end{thm}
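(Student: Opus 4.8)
The plan is to verify the two requirements of the conclusion in turn: first that $\varphi_{\alpha}$ is a generalized $m$-exponential monomial, and then that the variety it generates is finite dimensional.

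I would dispatch the finite dimensionality first, since it is the quicker half. Put $V=\operatorname{span}\{\varphi_{\beta}\colon \beta\le\alpha\}$, a finite dimensional linear subspace of $\mathscr C(X)$ containing $\varphi_{\alpha}$. Reading \eqref{Eq3} not at $\alpha$ but at an arbitrary $\beta\le\alpha$ gives, for every $y$ in $X$,
$$
\tau_y\varphi_{\beta}=\delta_{\check{y}}*\varphi_{\beta}=\sum_{\delta\le\beta}\binom{\beta}{\delta}\varphi_{\beta-\delta}(y)\,\varphi_{\delta},
$$
which again lies in $V$; hence $V$ is invariant under every translation. A finite dimensional subspace of a Hausdorff topological vector space is closed, so $V$ is a variety. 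Since the variety generated by $\varphi_{\alpha}$ is the smallest variety containing $\varphi_{\alpha}$, it is contained in $V$, and hence finite dimensional --- which is precisely the condition that upgrades ``generalized $m$-exponential monomial'' to ``$m$-exponential monomial''.

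For the generalized part the key point is the cancellation of the top-degree term. Because $\varphi_0=m$, the summand $\beta=\alpha$ in \eqref{Eq3} equals $m(y)\varphi_{\alpha}$, so
$$
\Delta_{m;y}*\varphi_{\alpha}=(\tau_y-m(y)\operatorname{id})\varphi_{\alpha}=\sum_{\beta<\alpha}\binom{\alpha}{\beta}\varphi_{\alpha-\beta}(y)\,\varphi_{\beta}.
$$
The same identity, read now at an arbitrary $\gamma\le\alpha$, shows that the operator $f\mapsto\Delta_{m;y}*f$ carries each $\varphi_{\gamma}$ into the linear span of the functions $\varphi_{\beta}$ with $\beta<\gamma$, hence with $|\beta|\le|\gamma|-1$. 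Thus, on $V$, this operator strictly decreases the largest value of $|\beta|$ occurring among the basis functions $\varphi_{\beta}$, so applying any $|\alpha|+1$ such operators annihilates $\varphi_{\alpha}$: we get $\Delta_{m;y_1,\dots,y_{|\alpha|+1}}*\varphi_{\alpha}=0$ for all $y_1,\dots,y_{|\alpha|+1}$ in $X$, and therefore $\varphi_{\alpha}$ is a generalized $m$-exponential monomial of degree at most $|\alpha|$. (Equivalently, one may run an induction on $|\alpha|$: the base case $\alpha=0$ is the identity $\Delta_{m;y}*m=0$, and the inductive step combines the displayed formula with the induction hypothesis applied to each $\varphi_{\beta}$ with $\beta<\alpha$.)

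I do not expect a genuine obstacle; the argument is essentially formal. The things to watch are that $\varphi_0=m$ is really used --- it is what makes the top-degree term cancel, hence what makes the relevant exponential $m$ and not some other one --- that the translation invariance of $V$ relies on having \eqref{Eq3} available at every $\beta\le\alpha$ rather than only at $\alpha$, and that restricting to the finite sub-family $\{\varphi_{\beta}\colon\beta\le\alpha\}$ loses nothing, since \eqref{Eq3} couples $\varphi_{\alpha}$ only to indices $\beta\le\alpha$.
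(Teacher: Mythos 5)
Your proposal is correct and follows essentially the same route as the paper: the key identity $\Delta_{m;y}*\varphi_{\alpha}=\sum_{\beta<\alpha}\binom{\alpha}{\beta}\varphi_{\alpha-\beta}(y)\varphi_{\beta}$ obtained from \eqref{Eq3} by cancelling the top term, iterated (the paper phrases this as an induction on $|\alpha|$, you as a degree-lowering filtration, which you correctly note are equivalent), plus the observation that the span of $\{\varphi_{\beta}:\beta\le\alpha\}$ is a finite dimensional translation-invariant space containing $\varphi_{\alpha}$. Your write-up is in fact slightly more careful than the paper's on the closedness of that span and on why \eqref{Eq3} at all indices $\beta\le\alpha$ is needed.
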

\begin{proof}
	We prove the statement by induction on $|\alpha|$, where $\alpha$ is the multi-degree of the moment function $\varphi$ of rank $r$, which corresponds to the exponential $m$. If $|\alpha|=0$, then $\alpha$ is the zero of $\N^r$ hence, by \eqref{Eq3}, we infer that $\varphi=m$ is an exponential. Assume that we have proved the statement for each $\alpha$ in $\N^r$ with $|\alpha|\leq N$, and let $\varphi$ be a moment function of rank $r$ corresponding to the exponential $m$, having multi-degree $\alpha$ with $|\alpha|=N+1$. From \eqref{Eq3} we infer
	$$
	\Delta_{m;y}\varphi_{\alpha}(x)=\varphi_{\alpha}(x*y)-m(y)\varphi_{\alpha}(x)=\sum_{0\leq \beta<\alpha} \binom{\alpha}{\beta} \varphi_{\beta}(x) \varphi_{\alpha-\beta}(y)
	$$
	for each $x,y$ in $X$. On the right side we have a linear combination of the moment functions $\varphi_{\beta}$, each of them having multi-degree $\beta<\alpha$, hence $|\beta|\leq N$. By the induction hypothesis, we have that 
	$$\Delta_{m;y_1,y_2,\dots,y_{N+1},y}\varphi_{\alpha}(x)=\Delta_{m;y_1,y_2,\dots,y_{N+1}}*\Delta_{m;y}\varphi_{\alpha}(x)
	=
	\sum_{0\leq \beta<\alpha} \binom{\alpha}{\beta} \Delta_{m;y_1,y_2,\dots,y_{N+1}}*\varphi_{\beta}(x)\cdot  \varphi_{\alpha-\beta}(y)=0,$$
	hence  $\varphi_{\alpha}$ is a generalized $m$-exponential monomial of degree at most $N+1$. By \eqref{Eq3}, the variety of every moment function is finite dimensional, as it is spanned by the finitely many functions $\varphi_{\beta}$ with $0\leq \beta\leq \alpha$, every moment function is an exponential monomial.
\end{proof}

In fact, as an additional information we have proved that the multi-degree $\alpha$ of a moment function $\varphi$ of rank $r$ satisfies $\deg \varphi\leq |\alpha|$, where $\deg \varphi$ denotes its degree, as an exponential monomial. We note that, in general, we do not have equality here: for instance, the zero function may have any nonzero multi-degree, as a moment function. 

Now we are going to give an example of a hypergroup, which is not a group, where generalized $m$-exponential monomials exist. Let us consider the hypergroup joins $C\vee D$. The construction and properties of a hypergroup joins can be found e.g. in \cite{SzeVat20}. We consider a particular case, where the compact part $C$ is the two-point hypergroup $D(\theta)=\left\{ 0,i \right\}$ and the discrete part $D$ is $\R$ endowed with the discrete topology. Using \cite[theorem. 7]{SzeVat20} we know the form of generalized exponential monomials on $D(\theta)\vee \R$. More precisely: a continuous function $f$ on $D(\theta) \cup \R$ is a generalized exponential monomial of degree at most $n$ if and only if one of the following cases holds:
\begin{enumerate}
	\item $f|_{D(\theta)}$ is a generalized exponential monomial of degree at most $n$ associated with an exponential $m|_{D(\theta)}\neq 1$ on $D(\theta)$  and $f|_{\R}=0$
	\item $f|_{D(\theta)}$ is constant and $f|_{\R}$ is a generalized exponential monomial on $\R$.
\end{enumerate} 

This means that there exist generalized exponential monomials on $D(\theta)\vee \R$, which are not exponential monomials, as on $\R$ there exist nontrivial generalized exponential monomial. For more detailed discussion see \cite{Sze13a} and references therein.

\section{Generalized derivations}

In our former paper \cite{FecGseSze22b} we investigated the relation between moment function sequences and higher order derivations on the measure algebra. We proved that there is a close connection between these two concepts, especially when we assume that the derivations are not just linear operators on the measure algebra, but they are also module homomorphisms if the measure algebra is considered as a module over the ring of continuous functions. As moment functions are special exponential monomials, it is reasonable to ask if it is possible to extend the concept of higher order derivations in a way such that this extension relates to generalized exponential monomials. We present a possible answer this question. 

Let $X$ be a commutative hypergroup and $r$ a positive integer. The measure algebra $\mathscr M_c(X)$ will be considered as a module over the ring $\mathscr C(X)$ of continuous complex valued functions on $X$ with the action
$$
\langle \varphi \cdot \mu,f\rangle=\int_X f\cdot \varphi\,d\mu. 
$$
We shall write simply $\varphi\mu$ for $\varphi\cdot \mu$.

We recall (see \cite{FecGseSze22b}) that the family of module homomorphisms $(D_{\alpha})_{\alpha\in \N^r}$ on $\mathscr M_c(X)$ is called a {\it higher order derivation of rank $r$}, or simply {\it higher order derivation}, if for each $\alpha$ in $\N^r$ we have
\begin{equation}\label{highor}
D_{\alpha}(\mu *\nu)=\sum_{\beta\leq \alpha} \binom{\alpha}{\beta}D_{\beta}\mu * D_{\alpha-\beta}\nu
\end{equation}
whenever $\mu,\nu$ are in $\mathscr M_c(X)$. We underline that the module homomorphism property means that each $D_{\alpha}$ satisfies the two functional equations
\begin{eqnarray*}
D_{\alpha}(\mu+\nu)&=&D_{\alpha}\mu+D_{\alpha}\nu\\
D_{\alpha} (\varphi \mu)&=&\varphi   D_{\alpha}\mu
\end{eqnarray*}
for each $\mu,\nu$ in $\mathscr M_c(X)$ and $\varphi$ in $\mathscr C(X)$. We say that this higher order derivation is {\it continuous}, if each operator $D_{\alpha}$ is continuous with respect to the weak*-topology on $\mathscr C(X)$. We note that $D_0$ is not just a module homomorphism, but it is also an algebra homomorphism, that is $D_0(\mu*\nu)=D_0\mu*D_0\nu$. We shall say that $D_0$ is a {\it multiplicative module homomorphism}, and that this higher order derivation {\it corresponds to  $D_0$}.

We note that our terminology is somewhat different from the usual one. Name\-ly, in general a linear operator $D:\mathscr M_c(X)\to \mathscr M_c(X)$ is called a {\it derivation}, if we have $D(\mu*\nu)=\mu* D\nu+D\mu*\nu$  for each $\mu,\nu$ in $\mathscr M_c(X)$. So, in the above definition, for the case $|\alpha|=1$ instead of homogeneity with respect to multiplication by complex numbers we require homogeneity with respect to multiplication by continuous functions, and instead of the identity operator $\operatorname{id}$ we use an arbitrary multiplicative module homomorphism $D_0$. This leads to the equation
$$
D(\mu*\nu)=D_0\mu* D\nu+D\mu*D_0\nu.
$$

Now we introduce a more general concept of derivations on $\mathscr M_c(X)$ recursively. Let $r$ a positive integer. For each $\alpha$ with $|\alpha|=1$ in $\N^r$, we say that the  module homomorphism $D:\mathscr M_c(X)\to \mathscr M_c(X)$ is a {\it generalized derivation of order $\alpha$}, if there exists a multiplicative module homomorphism $D_0$ of $\mathscr M_c(X)$  such that 
$$
D(\mu*\nu)=D_0\mu* D\nu+D\mu*D_0\nu
$$
for each $\mu,\nu$ in $\mathscr M_c(X)$. We say that $D$ {\it corresponds to $D_0$}. Assume that $\alpha$ is in $\N^r$, and we have defined generalized derivations corresponding to the multiplicative module homomorphism $D_0$ of order $\beta$ for each $\beta< \alpha$ in $\N^r$. Then the module homomorphism $D:\mathscr M_c(X)\to \mathscr M_c(X)$ is called a {\it generalized derivation of order $\alpha$} corresponding to the multiplicative module homomorphism $D_0$, if the two variable function 
$$
(\mu,\nu)\mapsto D(\mu*\nu)-D_0\mu* D\nu-D\mu* D_0\nu
$$
is a generalized derivation of order less than $\alpha$ corresponding to $D_0$  in both  variables, when the other variable is fixed. We extend this terminology for the case $\alpha=0$ by saying that any multiplicative module homomorphism is a generalized derivation of order zero -- corresponding to itself. We note that we may call these generalized derivations of rank $r$, if we want to underline the role of $r$ in the definition.

First we show that this concept is indeed a generalization of the concept of higher order derivations.

\begin{thm}\label{dergen}
	Let $X$ be a commutative hypergroup, $r$ a positive integer, and assume that the sequence of functions $(D_{\alpha})_{\alpha\in\N^r}$ is a higher order derivation of rank $r$ on $\mathscr M_c(X)$. Then $D_{\alpha}$ is a generalized derivation of order $\alpha$, for each $\alpha$ in $\N^r$. 
\end{thm}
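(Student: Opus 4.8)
The plan is to reduce everything to a single induction, carried on the total degree, about the operators that arise when one iterates the defect. From \eqref{highor}, subtracting the two extreme terms, one gets
$$
D_\alpha(\mu*\nu)-D_0\mu*D_\alpha\nu-D_\alpha\mu*D_0\nu=\sum_{0<\beta<\alpha}\binom{\alpha}{\beta}D_\beta\mu*D_{\alpha-\beta}\nu ,
$$
so the defect of $D_\alpha$ is a linear combination of the two-variable functions $(\mu,\nu)\mapsto(D_\beta\mu)*(D_{\alpha-\beta}\nu)$ with $0<\beta<\alpha$. This motivates an auxiliary notion: say that an operator $P$ on $\mathscr M_c(X)$ has \emph{type at most $n$} if it is a finite linear combination of operators of the form $\mu\mapsto(D_\beta\mu)*c$, where $c\in\mathscr M_c(X)$ is fixed and $1\le|\beta|\le n$; the zero operator is, by convention, of type at most $0$. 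Since each $D_\beta$ is a module homomorphism and $(D_\beta\mu)*\delta_o=D_\beta\mu$, every $D_\beta$ with $|\beta|\ge 1$ is itself of type at most $|\beta|$, so the theorem will follow once we know that every operator of type at most $n$ is a generalized derivation of order $\gamma$ corresponding to $D_0$ for every $\gamma$ with $|\gamma|\ge n$ (the case $\alpha=0$ being separate: $D_0$ is a multiplicative module homomorphism, hence a generalized derivation of order $0$).

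To prove that auxiliary statement I would induct on $n$. For $n=0$ the only operator in play is the zero operator, and a short separate induction on $|\gamma|$ shows it is a generalized derivation of every order: at order $0$ it is a multiplicative module homomorphism, and at higher order its defect is again the zero operator, which by the inductive hypothesis is a generalized derivation of any smaller order. For the step, the key computation is that the defect of $\mu\mapsto(D_\beta\mu)*c$, namely $(\mu,\nu)\mapsto(D_\beta(\mu*\nu))*c-D_0\mu*\bigl((D_\beta\nu)*c\bigr)-\bigl((D_\beta\mu)*c\bigr)*D_0\nu$, equals — using commutativity and associativity of the convolution in $\mathscr M_c(X)$ and then \eqref{highor} — the function $(\mu,\nu)\mapsto\sum_{0<\delta<\beta}\binom{\beta}{\delta}(D_\delta\mu)*\bigl((D_{\beta-\delta}\nu)*c\bigr)$. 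Fixing one variable and grouping by the index acting on the free variable exhibits each section of this defect as an operator of type at most $|\beta|-1$, hence each section of the defect of a general $P$ of type at most $n$ has type at most $n-1$. Thus, given such a $P$ and $\gamma$ with $|\gamma|\ge n\ge 1$, choose $\gamma'<\gamma$ with $|\gamma'|=|\gamma|-1\ge n-1$; the inductive hypothesis makes the sections of the defect of $P$ generalized derivations of order $\gamma'<\gamma$, so that $P$ is a generalized derivation of order $\gamma$, as required.

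Applying the auxiliary statement with $n=|\beta|$ to each $D_\beta$ (via $(D_\beta\mu)*\delta_o=D_\beta\mu$) yields that $D_\beta$ is a generalized derivation of order $\beta$ for all $\beta$ with $|\beta|\ge 1$, which together with the trivial case $\beta=0$ is exactly the assertion of the theorem. The step I expect to demand the most care is the bookkeeping in the auxiliary statement. Because "order" is the coordinatewise partial order on $\N^r$ rather than a scalar, one cannot simply say that a sum of generalized derivations of orders $\beta<\alpha$ is a generalized derivation of order $<\alpha$ — the coordinatewise supremum of the occurring $\beta$'s may be all of $\alpha$. The remedy implemented above is to phase the induction on the total degree $|\cdot|$, to prove the robust conclusion "generalized derivation of order $\gamma$ for every $\gamma$ with $|\gamma|\ge n$", and to use at the bottom that the iterated defects collapse to the zero operator, which qualifies as a generalized derivation of every order.
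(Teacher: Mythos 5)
Your argument is correct and rests on the same algebraic identity as the paper's proof --- rearranging \eqref{highor} so that the defect of $D_\alpha$ becomes $\sum_{0<\beta<\alpha}\binom{\alpha}{\beta}D_\beta\mu*D_{\alpha-\beta}\nu$ --- but your inductive scaffolding is genuinely different and noticeably more careful. The paper inducts directly on the assertion ``$D_\beta$ is a generalized derivation of order $\beta$'' and then simply declares that the displayed sum is a generalized derivation of order less than $\alpha$ in each variable. That step silently uses two facts the paper never establishes: that $\mu\mapsto (D_\beta\mu)*c$ inherits the generalized-derivation property from $D_\beta$ (the induction hypothesis only speaks about $D_\beta$ itself), and that a sum of generalized derivations of assorted orders $\beta<\alpha$ is a generalized derivation of a \emph{single} order $<\alpha$ --- exactly the partial-order pitfall you identify, since the occurring $\beta$'s need not admit a common upper bound strictly below $\alpha$. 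Your class of operators of ``type at most $n$'' is closed under linear combinations and under convolution with fixed measures by construction, and the strengthened conclusion ``generalized derivation of order $\gamma$ for every $\gamma$ with $|\gamma|\ge n$,'' proved by induction on the scalar $n$, plugs both holes; the price is the explicit computation of the defect of $\mu\mapsto(D_\beta\mu)*c$, which you carry out correctly via \eqref{highor} and commutativity of convolution. One caveat you share with the paper rather than introduce: sections such as $\mu\mapsto(D_\beta\mu)*c$ are linear but not module homomorphisms over $\mathscr C(X)$ (multiplication by $\varphi$ does not commute with convolution by $c$), so neither proof literally verifies the module-homomorphism clause in the definition of a generalized derivation for the defect's sections; that is an issue with the paper's definitions, not a gap particular to your argument.
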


\begin{proof}
We prove this statement by induction on $|\alpha|$. For $|\alpha|=1$,we assume that $D_{\alpha}\colon \mathscr M_c(X)\to \mathscr M_c(X)$ is a derivation of order $\alpha$ corresponding to the multiplicative module homomorphism $D_0$. This means that we have 
\[
 D_{\alpha}(\mu*\nu)= D_{0}\mu* D_{\alpha}\nu+D_{\alpha}\mu* D_{0}\nu
\]
for each $\mu,\nu$ in $\mathscr M_c(X)$. In other words, 
\[
 D_{\alpha}(\mu*\nu)-D_{0}\mu* D_{\alpha}\nu-D_{\alpha}\mu* D_{0}\nu=0 
 \]
for each $\mu,\nu$ in $\mathscr M_c(X)$, and this is obviously a generalized derivation of order zero in both variables. 

Assume now that there exists an $\alpha$ in $\mathbb{N}^{r}$ with the property that the statement holds for all multi-indices less then $\alpha$. Let further $D_{\alpha}\colon \mathscr M_c(X)\times \mathscr M_c(X)\to \mathscr M_c(X)$ be a generalized derivation of order $\alpha$. Then 
\[
 D_{\alpha}(\mu*\nu)= \sum_{\beta\leq \alpha}\binom{\alpha}{\beta}D_{\beta}\mu *D_{\alpha-\beta}\nu
\]
for each $\mu,\nu$ in $\mathscr M_c(X)$. After some rearrangement, we get that 
\[
 D_{\alpha}(\mu*\nu)-D_{0}\mu* D_{\alpha}\nu-D_{\alpha}\mu *D_{0}\nu= \sum_{0< \beta < \alpha}\binom{\alpha}{\beta}D_{\beta}\mu* D_{\alpha-\beta}\nu
 \]
for each $\mu,\nu$ in $\mathscr M_c(X)$. Here the right side is a symmetric mapping and, due to the induction hypothesis, it is a generalized derivation of order at most $\beta<\alpha$ in both variables. This proves our statement. 
\end{proof}

\begin{thm}
Let $X$ be a commutative hypergroup, $r$ a positive integer and $\alpha$ in $\N^r$. Given a continuous generalized derivation $D$ of order $\alpha$ on $\mathscr M_c(X)$ corresponding to the continuous multiplicative module homomorphism $D_0$  we define
\begin{equation}\label{mfidef}
m(x)=\langle D_0\delta_x,1\rangle
\qquad 
\text{ and }
\qquad
\varphi(x)=\langle D\delta_x,1\rangle
\end{equation}
for each $x$ in $X$. Then $m$ is an exponential, and $\varphi$ is a generalized $m$-exponential monomial of degree $|\alpha|$, further we have
\begin{equation}\label{mfidefre}
\langle D_0\mu,f\rangle=\int_X  f\cdot m\,d\mu
\qquad 
\text{and }
\qquad
\langle D\mu,f\rangle=\int_X  f\cdot \varphi\,d\mu
\end{equation}
for each $\mu$ in $\mathscr M_c(X)$ and $f$ in $\mathscr C(X)$.
\end{thm}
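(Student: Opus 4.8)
The plan is to unwind the definitions in three stages: first establish the integral representations \eqref{mfidefre}, then verify that $m$ is an exponential, and finally prove by induction on $|\alpha|$ that $\varphi$ is a generalized $m$-exponential monomial of degree exactly $|\alpha|$. The representations \eqref{mfidefre} should follow from the description of (continuous, weak*-continuous) module homomorphisms of $\mathscr M_c(X)$ in \cite{FecGseSze22b}: any such homomorphism $F$ is of the form $\langle F\mu,f\rangle=\int_X f\cdot g\,d\mu$ for a fixed $g\in\mathscr C(X)$, and testing against $\mu=\delta_x$ identifies $g$ with $x\mapsto\langle F\delta_x,1\rangle$. Applying this to $F=D_0$ and $F=D$ gives exactly \eqref{mfidef} and \eqref{mfidefre}.

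Next I would show $m$ is an exponential. Since $D_0$ is a multiplicative module homomorphism, $\langle D_0(\delta_x*\delta_y),1\rangle=\langle D_0\delta_x*D_0\delta_y,1\rangle$. Using \eqref{mfidefre} on the left with $\mu=\delta_x*\delta_y$ and $f=1$ gives $\int_X m\,d(\delta_x*\delta_y)=m(x*y)$; on the right, the defining property of convolution together with $\langle D_0\delta_x,\cdot\rangle$ being integration against $m$ gives $m(x)m(y)$. Hence $m(x*y)=m(x)m(y)$, and $m(o)=\langle D_0\delta_o,1\rangle=1$ because $D_0\delta_o$ is the identity of the convolution algebra (as $D_0$ is an algebra homomorphism and $\delta_o$ is the unit), so $m$ is an exponential.

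The substantive part is the claim about $\varphi$, proved by induction on $|\alpha|$. For $|\alpha|=0$, $D=D_0$, so $\varphi=m$, which is an $m$-exponential monomial of degree $0$. For the inductive step, fix $y\in X$ and consider $\Delta_{m;y}*\varphi$. Using \eqref{mfidefre} one computes, for each $x$,
$$
(\Delta_{m;y}*\varphi)(x)=\varphi(x*y)-m(y)\varphi(x)=\langle D(\delta_x*\delta_y),1\rangle-m(y)\langle D\delta_x,1\rangle.
$$
Now expand $\langle D(\delta_x*\delta_y),1\rangle$ using the generalized-derivation identity: the two-variable map $(\mu,\nu)\mapsto D(\mu*\nu)-D_0\mu*D\nu-D\mu*D_0\nu$ is a generalized derivation of order $<\alpha$ in each variable. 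Writing $D(\delta_x*\delta_y)=D_0\delta_x*D\delta_y+D\delta_x*D_0\delta_y+R(\delta_x,\delta_y)$ and pairing with $1$, the term $D\delta_x*D_0\delta_y$ contributes $\varphi(x)m(y)$, which cancels $m(y)\varphi(x)$; the term $D_0\delta_x*D\delta_y$ contributes $m(x)\varphi(y)$, a fixed multiple of $m(x)=\langle D_0\delta_x,1\rangle$; and $x\mapsto\langle R(\delta_x,\delta_y),1\rangle$ is, for fixed $y$, of the form $x\mapsto\langle E\delta_x,1\rangle$ where $E=R(\cdot,\delta_y)$ is a generalized derivation of order $<\alpha$ corresponding to $D_0$. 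The key algebraic point to check is that any fixed complex multiple of $m$ is $\Delta_{m;\cdot}$-annihilated after one further difference (it is an $m$-exponential monomial of degree $0$), while by the induction hypothesis the function attached to $E$ is a generalized $m$-exponential monomial of degree $\le|\alpha|-1$. Hence $\Delta_{m;y}*\varphi$ is a sum of generalized $m$-exponential monomials of degree $\le|\alpha|-1$, so it is itself one of degree $\le|\alpha|-1$; since $y$ was arbitrary, $\varphi$ is a generalized $m$-exponential monomial of degree $\le|\alpha|$.

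The main obstacle — and the point I would be most careful about — is twofold: first, making the reduction "the $R$-term, viewed in $x$ with $y$ fixed, is the function associated to a genuine generalized derivation of strictly smaller order" precise, since the recursive definition only says $R$ is a generalized derivation of order $<\alpha$ "in each variable," and one must argue that this lower order can be taken uniformly (or at least invoke it componentwise) so the induction hypothesis applies; and second, establishing that the degree is \emph{exactly} $|\alpha|$ rather than merely $\le|\alpha|$. For the latter I expect one needs to use that $D$ is genuinely of order $\alpha$ and not of any smaller order — presumably built into what "generalized derivation of order $\alpha$" means, or else the statement should be read as "degree at most $|\alpha|$" — and to track that the relevant lower-order piece $\langle D'\delta_x,1\rangle$ (for a suitable component derivation $D'$ of order $\alpha-e_i$) is nonzero, which should again follow by the inductive description. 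If the paper's convention makes "order $\alpha$" allow smaller effective order, I would state and prove the inequality $\deg\varphi\le|\alpha|$ and remark, as after Theorem \ref{momexp}, that equality can fail.
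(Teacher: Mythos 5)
Your proposal is correct and follows essentially the same route as the paper: the integral representations come from weak*-duality plus the module-homomorphism property, multiplicativity of $D_0$ gives $m(x*y)=m(x)m(y)$, and the inductive step decomposes $\varphi(x*y)-m(y)\varphi(x)$ via $B(\mu,\nu)=D(\mu*\nu)-D_0\mu*D\nu-D\mu*D_0\nu$ into the degree-zero term $m(x)\varphi(y)$ plus a term of degree at most $|\alpha|-1$. Your caveat about ``degree exactly $|\alpha|$'' versus ``degree at most $|\alpha|$'' is well taken --- the paper's own argument likewise only establishes the upper bound --- and your handling of it (reading the conclusion as $\deg\varphi\le|\alpha|$) is the reasonable resolution.
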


\begin{proof}
The continuity of $m$ and $\varphi$ follows easily from the continuity of $D_0$ and $D$. For each $x,y$ in $X$ we have
\begin{multline*}
m(x*y)=\langle D_0\delta_{x*y},1\rangle=\int_X 1 \,dD_0\delta_{x*y}=\int_X 1 \,dD_0(\delta_{x}*\delta_y)
\\
=
\int_X \int_X 1 \,dD_0\delta_{x}\,dD_0\delta_{y}=
\int_X 1 \,dD_0\delta_{x}\cdot \int_X 1 \,dD_0\delta_{y}=m(x)m(y).
\end{multline*}
As $D_0$ is nonzero, we have $m(o)=1$, hence $m$ is an exponential.
\vskip.2cm

The statement about $\varphi$ will be proved by induction on $|\alpha|$, and it is true for $|\alpha|=0$. Suppose that the statement holds for $|\alpha|\leq N$, and now we prove it for an arbitrary multi-index $\alpha'$ with $|\alpha'|=N+1$. Let $x,y,y_1,y_2,\dots,y_{N+1}$ be any elements in $X$, and we show that 
$$
\Delta_{m;y_1,y_2,\dots,y_{N+1},y}*\varphi(x)=\Delta_{m;y_1,y_2,\dots,y_{N+1}}*\Delta_{m;y}*\varphi(x)=0.
$$
We let $\psi(x)=\varphi(x*y)-m(y)\varphi(x)$, then it is enough to show that 
$$
\Delta_{m;y_1,y_2,\dots,y_{N+1}}*\psi(x)=0.
$$
We introduce the notation
$$
B(\mu,\nu)=D(\mu*\nu)-D_0\mu*D\nu-D\mu*D_0\nu
$$
whenever $\mu,\nu$ are in $\mathscr M_c(X)$. Then $B$ is a generalized derivation of order at most $\alpha$ in both variables, and we can compute as follows:
\begin{flalign*}
\psi(x)&=\varphi(x*y) -m(y)\varphi(x)=\langle D\delta_{x*y}-D_0\delta_y* D\delta_x,1\rangle&
\\
&=\langle D\delta_{x}*D\delta_{y}-D_0\delta_y* D\delta_x,1\rangle =\langle D\delta_y* D_0\delta_x,1\rangle+\langle B(\delta_x,\delta_y),1\rangle&
\\
&=\int_X 1 \,d(D\delta_y* D_0\delta_x)+\langle B(\delta_x,\delta_y),1\rangle
=\int_X \int_X 1 \,dD\delta_y\, dD_0\delta_x+\langle B(\delta_x,\delta_y),1\rangle &
\\
&=\int_X 1 \,dD\delta_y \int_X 1 \, dD_0\delta_x+\langle B(\delta_x,\delta_y),1\rangle
=\varphi(y)m(x)+\langle B(\delta_x,\delta_y),1\rangle.&
\end{flalign*}
The function $x\longmapsto \langle B(\delta_x,\delta_y),1\rangle$ is a generalized exponential monomial of degree less than $|\alpha'|$, hence its degree is at most $N$. It follows that $\psi$ is a generalized exponential monomial of degree less than $N$. We conclude that 
$$
\Delta_{m;y_1,y_2,\dots,y_{N+1},y}*\varphi(x)=\Delta_{m;y_1,y_2,\dots,y_{N+1}}*\psi(x)=0,
$$
which proves that $\varphi$ a generalized exponential monomial of degree $N+1$.
\vskip.2cm

To prove \eqref{mfidefre}, observe that the mapping $\mu\mapsto \langle D_0\mu,1\rangle$ is a weak*-contin\-u\-ous linear functional on $\mathscr M_c(X)$, hence it arises from a function $\varphi_0$ in $\mathscr C(X)$ in the following way:
$$
\langle D_0\mu,1\rangle=\int_X \varphi_0\,d\mu
$$ 
for each $\mu$ in $\mathscr M_c(X)$. For $\mu=\delta_x$ this gives
$$
\varphi_0(x)=\int_X \varphi\,d\delta_x=\langle D_0\delta_x,1\rangle=m(x).
$$
Similarly, we obtain the second part of \eqref{mfidefre}, and the proof is complete.
\end{proof}

\begin{prop}
 Let $X$ be a commutative hypergroup and $F\colon \mathscr M_c(X) \to \mathscr{M}_{c}(X)$ be a continuous module homomorphism such that $\factor{\mathscr M_c (X)}{\operatorname{ker}(F)}$ is finite dimensional and define the continuous function 
 $f\colon X\to \mathbb{C}$ by 
 \[
  f(x)= \langle F\delta_{x}, 1\rangle
 \]
for each $x\in X$. Then the variety $\tau(f)$  is finite dimensional. 
\end{prop}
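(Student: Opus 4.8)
The plan is to pin down the exact form of $F$, then read off that $f$ is finitely supported, and finally squeeze the variety $\tau(f)$ into a finite dimensional subspace attached to $\operatorname{im}(F)$. First I would show that $F$ is multiplication by $f$, i.e. $\langle F\mu,g\rangle=\int_X gf\,d\mu$ for all $\mu$ in $\mathscr M_c(X)$ and $g$ in $\mathscr C(X)$. This is the description of continuous module homomorphisms of the measure algebra obtained in \cite{FecGseSze22b}, but it also follows directly: the functional $\mu\mapsto\langle F\mu,1\rangle$ is weak*-continuous and linear, hence equal to $\mu\mapsto\int_X g_0\,d\mu$ for some $g_0$ in $\mathscr C(X)$, and evaluating at $\mu=\delta_x$ gives $g_0=f$; the module identity then yields $\langle F\mu,g\rangle=\langle F(g\mu),1\rangle=\int_X f\,d(g\mu)=\int_X fg\,d\mu$. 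In particular $F\delta_x=f(x)\delta_x$ for every $x$, which in passing also gives the continuity of $f$.

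Second, I would deduce that the support $S=\{x\in X:f(x)\ne 0\}$ is finite. If $f(x)\ne 0$, then $\delta_x=f(x)^{-1}F\delta_x$ lies in $\operatorname{im}(F)$; since pairwise distinct Dirac measures are linearly independent and $\operatorname{im}(F)\cong\factor{\mathscr M_c(X)}{\operatorname{ker}(F)}$ is finite dimensional, $S$ must be finite, and $\operatorname{im}(F)$ is then exactly the $|S|$-dimensional space of compactly supported measures concentrated on $S$.

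Third, I would tie $\tau(f)$ to $F$. Since $\tau_{y_1}\cdots\tau_{y_k}f=\delta_{\check{y}_1}*\cdots*\delta_{\check{y}_k}*f$, the formula of the first step lets one write every iterated translate of $f$ as
$$
\bigl(\tau_{y_1}\cdots\tau_{y_k}f\bigr)(x)=\int_X f\,d(\sigma*\delta_x)=\bigl\langle F(\sigma*\delta_x),1\bigr\rangle ,
$$
where $\sigma=\sigma_{y_1,\dots,y_k}$ is a convolution of finitely many Dirac masses. Because $\tau(f)$ is the closed linear span of all these iterated translates, it suffices to show that the functions $x\mapsto\langle F(\sigma*\delta_x),1\rangle$ all lie in one fixed finite dimensional subspace of $\mathscr C(X)$. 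Here each $F(\sigma*\delta_x)=f\cdot(\sigma*\delta_x)$ lies in the finite dimensional space $\operatorname{im}(F)$, so fixing a basis $\nu_1,\dots,\nu_n$ of $\operatorname{im}(F)$ and writing $F(\sigma*\delta_x)=\sum_i c_i^{\sigma}(x)\,\nu_i$ reduces the problem to showing that the coefficient functions $c_i^{\sigma}$ stay within a common finite dimensional space as $\sigma$ varies.

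This last confinement is the step I expect to be the main obstacle: a priori the $c_i^{\sigma}$ can depend on $\sigma$ in an uncontrolled way and might sweep out an infinite dimensional family. The natural way to finish is to transfer the convolution with $\sigma$ from the free variable $\delta_x$ onto the finitely supported $f$: writing $\langle F(\sigma*\delta_x),1\rangle=\sum_{s\in S}f(s)\,(\sigma*\delta_x)(\{s\})$ and expanding $(\sigma*\delta_x)(\{s\})$ via the elementary kernels $x\mapsto(\delta_t*\delta_x)(\{s\})$, one would argue --- using finiteness of $S$ and of $\operatorname{im}(F)$, and exploiting the hypergroup convolution near $S$ in an essential way --- that all iterated translates do land in one fixed finite dimensional subspace, with $\dim\tau(f)$ controlled by $|S|=\dim\factor{\mathscr M_c(X)}{\operatorname{ker}(F)}$. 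Showing that these kernels cannot escape a finite dimensional space is precisely where the real work lies.
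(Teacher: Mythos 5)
Your first two steps are correct and are in fact sharper than anything in the paper's own proof: identifying $F$ as multiplication by $f$ (so that $F\delta_x=f(x)\delta_x$) and deducing that $S=\{x\in X:f(x)\neq 0\}$ is finite are both valid, and neither observation appears in the paper. The paper instead only uses that $\operatorname{rng}(F)\simeq \mathscr M_c(X)/\ker(F)$ is finite dimensional and then asserts, with no justification, a finite tensor decomposition $F(\delta_x*\delta_y)=\sum_{i=1}^n F_i(\delta_x)*G_i(\delta_y)$, from which $f(x*y)=\sum_i f_i(x)g_i(y)$ and the finite dimensionality of $\tau(f)$ would follow. That decomposition is exactly the ``confinement'' step you flag as the main obstacle, so you have located the crux correctly; but you leave it unproved, and your proposal is therefore incomplete.

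Moreover, the gap cannot be closed from the stated hypotheses: the step you are missing is genuinely false for a general continuous module homomorphism, and so is the proposition. Take $X=\mathbb{Z}$ (a discrete commutative group, hence a commutative hypergroup) and $F\mu=\mu(\{0\})\,\delta_0$. This map is additive, satisfies $F(\varphi\mu)=\varphi(0)\mu(\{0\})\delta_0=\varphi\, F(\mu)$, is weak*-continuous because $\langle F\mu,g\rangle=\int_X g(0)\chi_{\{0\}}\,d\mu$ for every $g$, and has one-dimensional range, so $\mathscr M_c(X)/\ker(F)$ is one dimensional. Here $f=\chi_{\{0\}}$, the translates of $f$ are the characteristic functions $\chi_{\{n\}}$ for all $n\in\mathbb{Z}$, and these are linearly independent, so $\tau(f)$ is infinite dimensional. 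Equivalently, the kernel $(x,y)\mapsto(\delta_x*\delta_y)(\{0\})$, which equals $1$ when $x+y=0$ and $0$ otherwise, has infinite rank --- precisely the reason both your third step and the paper's tensor decomposition fail. A correct version of the proposition must impose an additional hypothesis linking $F$ to the convolution (for instance, restricting to the generalized derivations to which the proposition is applied in the following theorem), and any correct proof has to use that extra structure rather than finite dimensionality of the range alone.
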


 \begin{proof}
  Suppose that we are given a continuous module homomorphism
  $F\colon \mathscr M_c(X) \allowbreak\to \mathscr M_c(X)$ such that $\factor{\mathscr M_c(X)}{\ker(F)}$ is finite dimensional. Due to the Fundamental Theorem of Homomorphisms, we immediately get that the range of $F$ is of finite dimension, since we have $\mathrm{rng}(F)\simeq \factor{\mathscr M_c(X)}{\operatorname{ker}(F)}$. 

Thus there exists a positive integer $n$ and there exists continuous functions $F_{i}\colon \mathscr{M}_{c}(X)\to \mathscr{M}_{c}(X)$ such that 
  \[
   F(\mu)=\sum_{i=1}^{n}F_{i}(\mu)
  \]
 for all $\mu$ in $\mathscr{M}_{c}(X)$. 
 Especially, for all $x\in X$ we have 
\[
   F(\delta_{x})=\sum_{i=1}^{n}F_{i}(\delta_{x})
\]
and also for all $x, y$ in $X$
\[
 F(\delta_{x}\ast \delta_{y})= \sum_{i=1}^{n}G_{i}(\delta_{y})F_{i}(\delta_{x}). 
\]
 
 Consider the function $f\in \mathscr{C}(X)$ defined by 
 \[
  f(x)=\langle F \delta_{x}, 1\rangle 
  \qquad 
  \left(x\in X\right).
 \]
 Then for all $x, y$ in $X$ we have 
 \begin{align*}
  \tau_{y}\ast f(x)
  = 
  f(x\ast y)
  &=
  \langle F(\delta_{x}\ast \delta_{y}), 1\rangle
  =
  \langle \sum_{i=1}^{n}F_{i}(\delta_{x})\ast G_{i}(\delta_{y}), 1\rangle
  \\
  &=
  \sum_{i=1}^{n} \langle F_{i}(\delta_{x})\ast G_{i}(\delta_{y}), 1\rangle
  =
  \sum_{i=1}^{n}\int_{X}1d\left(F_{i}(\delta_{x})\ast G_{i}(\delta_{y})\right)
  \\
  &=
  \sum_{i=1}^{n}\int_{X}\int_{X}1dF_{i}(\delta_{x})dG_{i}(\delta_{y})
  =
  \sum_{i=1}^{n}\int_{X} 1dF_{i}(\delta_{x}) \cdot \int_{X}1dG_{i}(\delta_y)
  \\
  &= 
  \sum_{i=1}^{n}f_{i}(x)g_{i}(y), 
  \end{align*}
 where the continuous functions $f_{i}, g_{i}\colon X\to \mathbb{C}$, $i=1, \ldots, n$  are defined through 
\[
  f_{i}(x)=\langle F_{i}\delta_{x}, 1\rangle= \int_{X} 1dF_{i}(\delta_{x})  
  \quad 
  \text{and}
  \quad 
  g_{i}(x)=\langle G_{i}\delta_{x}, 1\rangle= \int_{X} 1dG_{i}(\delta_{x})  
 \]
 for all $x$ in $X$. This means however that all the translates of the function $f$ belong to a finite dimensional linear space. So the variety $\tau(f)$ is finite dimensional. 
 \end{proof}

\begin{thm}
 Let $X$ be a commutative hypergroup, $r$ a positive integer and $\alpha$ in $\N^r$. Given a continuous generalized derivation $F$ of order $\alpha$ on $\mathscr M_c(X)$ corresponding to the continuous multiplicative module homomorphism $D_0$, the following statements are equivalent. 
 \begin{enumerate}[(i)]
  \item $\factor{\mathscr{M}_{c}(X)}{\ker(F)}$ is finite dimensional. 
  \item The mapping $F$ is a higher order derivation of rank $r$ with degree $\alpha$. 
 \end{enumerate}
\end{thm}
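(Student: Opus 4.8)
The plan is to transfer both conditions to a statement about the single function attached to $F$. By the theorem above relating generalized derivations to generalized exponential monomials (see \eqref{mfidef}--\eqref{mfidefre}), $F$ acts by multiplication, $F\mu=\varphi\mu$ and $D_0\mu=m\mu$, where $m(x)=\langle D_0\delta_x,1\rangle$ is an exponential and $\varphi(x)=\langle F\delta_x,1\rangle$ is a generalized $m$-exponential monomial of degree $|\alpha|$. Consequently $\ker(F)=\{\mu\in\mathscr M_c(X):\varphi\mu=0\}$ and $\operatorname{rng}(F)=\varphi\cdot\mathscr M_c(X)$, and the Fundamental Theorem of Homomorphisms gives $\factor{\mathscr M_c(X)}{\ker(F)}\simeq\operatorname{rng}(F)$. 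Thus statement (i) is equivalent to finite dimensionality of $\varphi\cdot\mathscr M_c(X)$, which is a condition on $\varphi$ alone.

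To prove (ii)$\Rightarrow$(i) I would start from a higher order derivation $(D_\beta)_{\beta\in\N^r}$ with $D_\alpha=F$. Evaluating $\langle D_\beta\delta_{x*y},1\rangle$ by means of \eqref{highor}, exactly as in the proof of the theorem just cited (or invoking the correspondence with moment function sequences of \cite{FecGseSze22b}), one sees that the functions $\varphi_\beta(x)=\langle D_\beta\delta_x,1\rangle$ form a moment function sequence corresponding to $m$, with $\varphi_\alpha=\varphi$. Theorem~\ref{momexp} then shows that $\varphi$ is an $m$-exponential monomial whose variety $\tau(\varphi)$ lies in the span of the finitely many $\varphi_\beta$ with $0\le\beta\le\alpha$. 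It remains to turn this into finite dimensionality of $\operatorname{rng}(F)$; I would do so by expanding $F(\mu*\nu)$ with \eqref{highor}, so that each $F(\delta_x*\delta_y)$ is the scalar combination of the finitely many measures $D_\beta\delta_x*D_{\alpha-\beta}\delta_y$ prescribed by the moment relation, and then using that $\operatorname{span}\{\delta_x*\delta_y\}$ is weak${}^{*}$-dense in $\mathscr M_c(X)$ while $F$ is weak${}^{*}$-continuous.

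For (i)$\Rightarrow$(ii) I would apply the Proposition: since $F$ is a continuous module homomorphism with $\factor{\mathscr M_c(X)}{\ker(F)}$ finite dimensional, $\tau(\varphi)$ is finite dimensional, hence $\varphi$ is an $m$-exponential monomial. One then has to produce a moment function sequence $(\varphi_\beta)_{\beta\in\N^r}$ with $\varphi_0=m$, $\varphi_\beta$ of multi-degree $\beta$, and $\varphi_\alpha=\varphi$; given such a sequence, the operators $D_\beta\mu:=\varphi_\beta\mu$ are module homomorphisms for which \eqref{highor} holds (by reversing the computation used above), so $F=D_\alpha$ is a higher order derivation of degree $\alpha$. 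The lower members $\varphi_\beta$, $\beta<\alpha$, I would read off from the bilinear corrections $B(\mu,\nu)=F(\mu*\nu)-D_0\mu*F\nu-F\mu*D_0\nu$, which by the recursive definition are generalized derivations of order $<\alpha$ in each variable and so carry symbols that are generalized $m$-exponential monomials of strictly smaller degree; finite dimensionality of the varieties involved is what lets one choose these symbols coherently, so that \eqref{Eq3} holds for the assembled family.

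The step I expect to be the main obstacle is precisely this two-way passage between the operator and the function pictures. The ``function side'' is handled cleanly by Theorem~\ref{momexp} and the Proposition, but a single function with finite dimensional variety carries strictly less information than a full moment function sequence, whereas finite dimensionality of $\operatorname{rng}(F)$ is a genuine statement about the operator; bridging the two forces one to use \eqref{highor} (respectively the recursive defining property of generalized derivations) together with weak${}^{*}$-density of $\operatorname{span}\{\delta_x*\delta_y\}$ and weak${}^{*}$-continuity, and, above all, to organize the recursively appearing lower-order data of $F$ into the additive relations \eqref{Eq3} that define a moment function sequence.
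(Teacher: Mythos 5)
Your proposal takes a genuinely different route from the paper's, and the difference is exactly where the gap lies. The paper never leaves the operator side: for $(i)\Rightarrow(ii)$ it forms the symmetric defect $D(\mu,\nu)=F(\mu*\nu)-D_0\mu*F\nu-F\mu*D_0\nu$, uses the finite dimensionality of $\mathrm{rng}(F)\simeq\factor{\mathscr M_c(X)}{\ker(F)}$ to write $D(\mu,\nu)=\sum_{\beta}F_\beta(\mu)*G_\beta(\nu)$ as a \emph{finite} sum of product terms, and then applies the induction hypothesis in each variable separately to identify the factors with members of a higher order derivation, which yields \eqref{highor} for $F=D_\alpha$. You instead push everything to the function side via \eqref{mfidefre} and the Proposition. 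The problem is that this transfer loses precisely the information you need: the Proposition only gives that $\tau(\varphi)$ is finite dimensional, i.e.\ that $\varphi$ is an $m$-exponential monomial, whereas $(ii)$ requires $\varphi$ to be the member of multi-degree $\alpha$ of a moment function sequence \eqref{Eq3} --- a strictly stronger property (already for $|\alpha|=1$: a moment function of multi-degree $\alpha$ with $|\alpha|=1$ must vanish at $o$ and be an $m$-sine function, while a general degree-one $m$-exponential monomial is a sine function plus a multiple of $m$). You correctly flag this as ``the main obstacle'', but the sentence about choosing the lower-order symbols ``coherently'' from $B(\mu,\nu)$ is not an argument; the actual mechanism that converts hypothesis $(i)$ into the existence of the lower-order operators is the finite tensor decomposition of the bilinear defect followed by induction in each variable, and that step is absent from your proposal.

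Your $(ii)\Rightarrow(i)$ is also not closed. Finite dimensionality of $\tau(\varphi)$ does not by itself bound $\dim\mathrm{rng}(F)$: by your own reduction $F\mu=\varphi\cdot\mu$, so $\mathrm{rng}(F)$ contains every $\varphi(x)\delta_x$, and the weak*-density argument you sketch cannot shrink that. The paper's argument here is instead the identity $F(\mu)=D_\alpha(\mu*\delta_o)=\sum_{\beta\le\alpha}\binom{\alpha}{\beta}D_\beta\mu*D_{\alpha-\beta}\delta_o$, applied directly to the operators (its own conclusion that $\mathrm{rng}(F)\subseteq\mathrm{lin}\{D_\beta\mid\beta\le\alpha\}$ admittedly also needs care, but the route is operator-theoretic, not functional). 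Relatedly, your opening claim that $(i)$ is equivalent to finite dimensionality of $\varphi\cdot\mathscr M_c(X)$, ``a condition on $\varphi$ alone'', should have been a warning sign: that space is finite dimensional only when $\varphi$ vanishes off a finite set, which is inconsistent with how you (and the paper) subsequently use $(i)$; the function-side picture is useful for interpreting the theorem but cannot carry this proof.
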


\begin{proof}
 Firstly we prove the direction $(ii)\Rightarrow (i)$. 
 Accordingly, let $X$ be a commutative hypergroup, $r$ a positive integer and $\alpha$ in $\N^r$. Assume further that $F$ is a higher order derivation of rank $r$ with degree $\alpha$ on $\mathscr{M}_{c}(X)$ corresponding to the continuous multiplicative module homomorphism $D_0$. 
 Then for all $\beta\in \mathbb{N}^{r}$ there exists a continuous module homomorphism 
 $D_{\beta}$ on $\mathscr{M}_{c}(X)$ such that $F= D_{\alpha}$ and for all $\beta \in \mathbb{N}^{r}$ we have 
 \[
  D_{\beta}(\mu \ast \nu)= \sum_{\gamma \leq \beta}\binom{\beta}{\gamma}D_{\gamma}(\mu)\ast D_{\beta-\gamma}(\nu)
 \]
for all $\mu, \nu$ in $\mathscr{M}_{c}(X)$. 
Especially, we have that for all $\mu\in \mathscr{M}_{c}(X)$
\[
F(\mu)=
 D_{\alpha}(\mu)= 
 D_{\alpha}(\mu \ast \delta_{o})= \sum_{\beta\leq \alpha}\binom{\alpha}{\beta}D_{\beta}(\mu)\ast D_{\alpha-\beta}(\delta_{o}) 
\]
holds. In other words, 
\[
 \mathrm{rng}(F)\subseteq \mathrm{lin} \left\{D_{\beta}\, \vert \, \beta \leq \alpha \right\}. 
\]
At the same time, due to the Fundamental Theorem of Homomorphisms
\[\mathrm{rng}(F)\simeq \factor{\mathscr{M}_{c}(F)}{\ker(F)}.\]
Further the linear space $\mathrm{lin} \left\{D_{\beta}\, \vert \, \beta \leq \alpha \right\}$ is obviously of finite dimension. \\
Thus $\factor{\mathscr{M}_{c}(F)}{\ker(F)}$ is finite dimensional. 

Finally, we consider the direction $(i) \Rightarrow (ii)$. 
Suppose now that $X$ is a commutative hypergroup, $r$ a positive integer and $\alpha$ in $\N^r$. Let further $F$ be a continuous generalized derivation  of order $\alpha$ on $\mathscr M_c(X)$ corresponding to the continuous multiplicative module homomorphism $D_0$ so that $\factor{\mathscr{M}_{c}(X)}{\ker(F)}$ is finite dimensional. 
We prove the statement by induction on the height of the multi-index $\alpha$. If $|\alpha|=1$, then we have 
\[
 F(\mu \ast \nu)= D_{0}(\mu)\ast F(\nu)+F(\mu)\ast D_{0}(\nu)
\]
for all $\mu, \nu\in \mathscr{M}_{c}(X)$ showing that the statement holds trivially in case $|\alpha|=1$. 
Assume now that there exists a multi-index $\alpha$ such that the statement holds for all multi-indices $\beta$ that are (strictly) less then $\alpha$. 
In this case the mapping $D$ defined on $\mathscr{M}_{c}(X)\times \mathscr{M}_{c}(X)$ by 
\[
 D(\mu, \nu)= F(\mu \ast \nu)-D_{0}(\mu)\ast F(\nu)-F(\mu)\ast D_{0}(\nu) 
 \qquad 
 \left(\mu, \nu\in \mathscr{M}_{c}(X)\right)
\]
is symmetric and it is a generalized derivation of degree (strictly) less than $\alpha$ in each of its variables. 

On the other hand, due to the Fundamental Theorem of Homomorphisms, we have 
\[
 \factor{\mathscr{M}_{c}(X)}{\ker(F)} \simeq \mathrm{rng}(F) 
\]
and due to condition (i), $\factor{\mathscr{M}_{c}(X)}{\ker(F)}$ or equivalently, $\mathrm{rng}(F)$ is finite dimensional. Thus for all fixed $\nu^{\ast}\in \mathscr{M}_{c}(X)$, the values of the mapping 
\[
 \mathscr{M}_{c}(X)\ni \mu \longmapsto D(\mu, \nu^{\ast}) 
\]
belong to this finite dimensional algebra. Similarly, for all fixed $\mu^{\ast}\in \mathscr{M}_{c}(X)$, the values of the mapping 
\[
 \mathscr{M}_{c}(X)\ni \nu \longmapsto D(\mu^{\ast}, \nu) 
\]
belong to this finite dimensional algebra. Therefore there exists a multi-index $\widetilde{\alpha}\in \mathbb{N}^{r}$  and there exist selfmappings $F_{\beta}, G_{\beta}$ of $\mathscr{M}_{c}(X)$ for all multi-index $\beta \leq \widetilde{\alpha}$ such that 
\[
 D(\mu, \nu)= \sum_{\beta\leq \widetilde{\alpha}} F_{\beta}(\mu)\ast G_{\beta}(\nu)
\]
holds for all $\mu, \nu$ in $\mathscr{M}_{c}(X)$. Due to the induction hypothesis, this mapping is also a symmetric function which is a generalized derivation of degree (strictly) less than $\alpha$ in each of its variables. However, this is only possible if 
\[
 D(\mu, \nu)= \sum_{\beta < \alpha} \binom{\alpha}{\beta} D_{\beta}(\mu)\ast D_{\alpha-\beta}(\nu)
\]
for all $\mu, \nu$ in $\mathscr{M}_{c}(X)$, with an appropriate higher order derivation $(D_{\alpha})_{\alpha\in \mathbb{N}^{r}}$. 
\end{proof}

\begin{thm}\label{expder}
	Let $X$ be a commutative hypergroup, $n$ a natural number, $m$ an exponential, and $\varphi$ a generalized exponential monomial of degree $n$. Then there exists a continuous multiplicative module homomorphism $D_0$, and a continuous generalized derivation $D$ of order $n$ corresponding to $D_0$ such that
	\eqref{mfidef} holds for each $x$ in $X$.
\end{thm}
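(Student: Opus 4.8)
The plan is to prove this by induction on $n = |\alpha|$, essentially reversing the construction in Theorem 2.3. The key is to realize that a generalized exponential monomial carries enough structure to build the corresponding operators on $\mathscr M_c(X)$ by duality, and that the defining functional equation of "$m$-exponential monomial of degree $n$" translates, under this duality, precisely into the recursive definition of "generalized derivation of order $\alpha$" with $|\alpha|=n$. Throughout I will work with the rank $r=1$ version, taking $\alpha = n$; the general rank-$r$ statement follows by padding with zeros, since the definitions only ever use $|\alpha|$ in an essential way through the degree.

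First I would handle the base case $n=0$: here $\varphi$ is a constant multiple of $m$, so $\varphi = c\,m$ for some $c\in\C$. Define $D_0$ by $\langle D_0\mu, f\rangle = \int_X f\cdot m\,d\mu$, which is a continuous module homomorphism, and one checks using $m(x*y)=m(x)m(y)$ that $D_0(\mu*\nu) = D_0\mu * D_0\nu$, so $D_0$ is a multiplicative module homomorphism; then $D = c\,D_0$ is a generalized derivation of order $0$ and $\langle D\delta_x,1\rangle = c\,m(x) = \varphi(x)$. For the inductive step, assume the statement holds for all generalized exponential monomials of degree less than $n$ (with respect to the \emph{same} exponential $m$), and let $\varphi$ have degree exactly $n$. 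The decisive observation is that for each fixed $y\in X$ the function $\psi_y(x) = \Delta_{m;y}*\varphi(x) = \varphi(x*y)-m(y)\varphi(x)$ is a generalized $m$-exponential monomial of degree at most $n-1$. More is true: the two-variable map $(x,y)\mapsto \varphi(x*y)-m(y)\varphi(x)$ should, after a symmetrization, encode a "generalized-derivation-valued" object. The construction then goes: first define $D_0$ from $m$ exactly as in the base case; next, I would try to define $D$ directly by $\langle D\mu, f\rangle = \int_X f\cdot\varphi\,d\mu$, which is automatically a continuous module homomorphism, and then verify that the residual two-variable map
$$
B(\mu,\nu) = D(\mu*\nu) - D_0\mu * D\nu - D\mu * D_0\nu
$$
is, in each variable separately, a continuous generalized derivation of order $< \alpha$ corresponding to $D_0$. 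Evaluating $\langle B(\delta_x,\delta_y),1\rangle$ and using the computation in the proof of Theorem 2.3 run backwards shows this quantity equals (a symmetrized version of) $\varphi(x*y)-m(x)\varphi(y)-m(y)\varphi(x) + \varphi(o)m(x)m(y)$ or similar, which as a function of $x$ (for fixed $y$) is a generalized $m$-exponential monomial of degree $\le n-1$; but this only controls $B$ after pairing with the constant function $1$, not $B$ itself as a measure-valued map.

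That last gap is where I expect the main obstacle to lie. Knowing that $\langle B(\delta_x, \delta_y), 1\rangle$ is a lower-degree generalized exponential monomial in $x$ does not by itself tell us that $B(\cdot,\nu)$ is a generalized derivation of order $<\alpha$ in the operator sense — we need the full measure-valued structure. To bridge this I would appeal to the description of module homomorphisms of $\mathscr M_c(X)$ from \cite{FecGseSze22b}: a continuous module homomorphism $F$ is determined by the single function $x\mapsto \langle F\delta_x, 1\rangle$ together with multiplicativity-type data, so that $\langle F\mu, f\rangle = \int_X f\cdot h\,d\mu$ for the associated $h$. Under this correspondence, composition and convolution on the operator side become pointwise operations on the function side, and the recursive clause "$B$ is a generalized derivation of order $<\alpha$ in each variable" becomes exactly the statement that $\Delta_{m;\,\cdot}*\varphi$ drops degree — which is precisely the hypothesis $\Delta_{m;y_1,\dots,y_{n+1}}*\varphi = 0$. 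So the real content of the proof is to set up the dictionary between the "operator" language of generalized derivations and the "function" language of generalized exponential monomials carefully enough that the induction goes through; once that dictionary is in place, applying the induction hypothesis to the degree-$(n-1)$ functions $\psi_y$ produces the required generalized derivations, and assembling them (with the binomial coefficients, as in \eqref{highor} when $\varphi$ happens to be a genuine moment function, or more loosely in general) yields $D$. The continuity of $D$ and $D_0$ is immediate from their integral representations.
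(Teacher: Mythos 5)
Your proposal follows essentially the same route as the paper's proof: define $D_0$ and $D$ by integration against $m$ and $\varphi$, form the residual $B(\mu,\nu)=D(\mu*\nu)-D_0\mu*D\nu-D\mu*D_0\nu$, observe that $\langle B(\delta_x,\delta_y),1\rangle$ is given by the lower-degree generalized $m$-exponential monomial $\varphi(x*y)-m(y)\varphi(x)-m(x)\varphi(y)$ in each variable, and close the induction. The ``main obstacle'' you flag --- upgrading control of $\langle B(\mu,\nu),1\rangle$ to control of $B$ as an operator --- is resolved exactly as you suspect: since all the operators involved are module homomorphisms, homogeneity under multiplication by continuous functions converts the pairing with the constant $1$ into the pairing with an arbitrary $f$ in $\mathscr C(X)$, which is the one-line argument the paper uses at that point.
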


\begin{proof}
We define $D_0$ and $D$ in the obvious way, as given in \eqref{mfidefre}. Then it is a routine calculation to show that $\mu\longmapsto \langle D_0\mu,1\rangle$ is a multiplicative linear functional of the measure algebra $\mathscr M_c(X)$. Hence $D_0$ is a continuous multiplicative module homomorphism of $\mathscr M_c(X)$. Then the first equation of \eqref{mfidef} follows. We prove the second equation of \eqref{mfidef} by induction on $n$. For $n=0$ the statement is equivalent to the first equation in \eqref{mfidef}. Now we assume $n\geq 1$ and we suppose that the second equation of \eqref{mfidef} defines a continuous generalized derivation of degree $k$  for each generalized $m$-exponential monomial $\varphi$ of degree $k$, whenever $k=0,1,\dots,n$.  Now let $\varphi$ be a generalized $m$-exponential monomial $\varphi$ of degree $n+1$, and we define
$$
\langle B(\mu,\nu),1\rangle=\int_X \int_X (\varphi(x*y)-m(y)\varphi(x)-m(x)\varphi(y))\,d\mu(x)\,d\nu(y)
$$
whenever $\mu,\nu$ are in $\mathscr M_c(X)$. Clearly, $B:\mathscr M_c(X)\times \mathscr M_c(X)\to\mathscr M_c(X)$ is a continuous module homomorphism in both variables -- in fact, it is a continuous generalized derivation of order at most $n$, by the induction hypothesis, as the integrand is a generalized $m$-exponential monomial of degree at most $n$, in both variables. On the other hand, for each $\mu,\nu$ in $\mathscr M_c(X)$ we have:
$$
\langle D(\mu*\nu)-D_0\mu*D\nu-D\mu*D_0\nu,1\rangle=\langle B(\mu,\nu),1\rangle.
$$
Using the homogeneity with respect to multiplication by continuous functions of all these operators in their arguments we get
$$
\langle D(\mu*\nu)-D_0\mu*D\nu-D\mu*D_0\nu,f\rangle=\langle B(\mu,\nu),f\rangle.
$$
for each $f$ in $\mathscr C(X)$. This proves that $D:\mathscr M_c(X)\to\mathscr M_c(X)$ is a continuous generalized derivation of order at $n+1$, and our proof is complete.
\end{proof}

\begin{ackn}
(E. Gselmann and L. Sz{\'e}kelyhidi): Project No. K134191 has been implemented with the support provided by the National Research,
Development and Innovation Fund of Hungary, financed under the K20 funding scheme. 
\end{ackn}

\end{document}